\DeclareMathOperator*{\Ran}{Ran}
\DeclareMathOperator*{\Ker}{Ker}
\DeclareMathOperator*{\dist}{dist}
\newcommand{\inv}{^{-1}}
\newcommand{\dd}{\mathrm{d}}
\newcommand{\B}{\mathcal{B}}
\newcommand{\F}{\mathcal{F}}
\newcommand{\TT}{\mathbb{T}}
\newcommand{\RR}{\mathbb{R}}
\newcommand{\CC}{\mathbb{C}}
\newcommand{\NN}{\mathbb{N}}
\newcommand{\ZZ}{\mathbb{Z}}
\newcommand{\DD}{\mathbb{D}}
\newcommand{\PP}{\mathbb{P}}
\newcommand{\ep}{\varepsilon}
\newcommand{\mlog}{m_{\mathrm{log}}}
\newcommand{\mmax}{m_{\mathrm{max}}}
\newtheorem{thm}{Theorem}[section]
\newtheorem{prp}[thm]{Proposition}
\theoremstyle{definition}
\newtheorem{rem}[thm]{Remark}
\newtheorem{ex}[thm]{Example}
\numberwithin{equation}{section}
\begin{document}

\title[Optimal rates of decay in the Katznelson-Tzafriri theorem]
{Optimal rates of decay in the Katznelson-Tzafriri theorem for operators on Hilbert spaces}

\author[A.C.S.\ Ng]{Abraham C.S.\ Ng}
\address[A.C.S.\ Ng]{Mathematical Institute, University of Oxford, Andrew Wiles Building, Radcliffe Obs.\ Quarter, Woodstock Road, Oxford, OX2 6GG, UK}
\email{abraham.cs.ng@gmail.com}

\author[D. Seifert]{David Seifert}
\address[D. Seifert]{School of Mathematics, Statistics and Physics, Herschel Building, Newcastle University, Newcastle upon Tyne, NE1 7RU, UK}
\email{david.seifert@ncl.ac.uk}

\begin{abstract}
The Katznelson-Tzafriri theorem is a central result in the asymptotic theory of discrete operator semigroups. It states that for a power-bounded operator $T$ on a Banach space we have $||T^n(I-T)\|\to0$ if and only if $\sigma(T)\cap\TT\subseteq\{1\}$. The main result of the present paper gives a sharp estimate for the \emph{rate} at which this decay occurs for operators on Hilbert space, assuming the growth of the resolvent norms $\|R(e^{i\theta},T)\|$ as $|\theta|\to0$ satisfies a mild regularity condition. This  significantly extends an earlier result by the second author, which covered the important case of polynomial resolvent growth. We further show that, under a natural additional assumption, our condition on the resolvent growth is not only sufficient but also necessary for the conclusion of our main result to hold. By considering a suitable class of Toeplitz operators we show that our theory has natural applications even beyond the setting of normal operators, for which we in addition obtain a more general result.
\end{abstract}

\subjclass[2010]{Primary: 47A05, 47A10. Secondary: 47A35.}
\keywords{Katznelson-Tzafriri theorem, rates of decay, resolvent estimates.}

\maketitle

\section{Introduction}\label{sec:intro}

The classical Katznelson-Tzafriri theorem \cite{KT86}  may be stated as follows.

\begin{thm}\label{thm:KT}
Let $X$ be a Banach space and let $T\in\B(X)$ be a power-bounded operator on $X$. Then $\|T^n(I-T)\|\to0$ as $n\to\infty$ if and only if $\sigma(T)\cap\TT\subseteq\{1\}$.
\end{thm}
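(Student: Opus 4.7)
The plan is to treat the two implications separately; only the sufficiency $(\Leftarrow)$ is substantive.

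For necessity $(\Rightarrow)$ I would argue by contraposition via the polynomial spectral mapping theorem: if $\lambda\in\sigma(T)\cap\TT$ with $\lambda\ne 1$, then $\lambda^n(1-\lambda)\in\sigma(T^n(I-T))$ for all $n\ge 0$, and therefore $\|T^n(I-T)\|\ge|1-\lambda|>0$, which rules out convergence to zero.

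For sufficiency $(\Leftarrow)$ assume $\sigma(T)\cap\TT\subseteq\{1\}$ and set $M=\sup_n\|T^n\|<\infty$. The subcase $r(T)<1$ is immediate since $\|T^n\|$ decays geometrically, so the core case is $r(T)=1$. The natural framework is the Wiener algebra $A^+=\bigl\{f(z)=\sum_{k\ge 0}\hat f(k)z^k:\|f\|_{A^+}:=\sum_k|\hat f(k)|<\infty\bigr\}$ together with the bounded unital homomorphism $\Phi\colon A^+\to\B(X)$ given by $\Phi(f)=\sum_k\hat f(k)T^k$, which satisfies $\|\Phi\|\le M$. The plan is to show $\|T^n\Phi(f)\|\to 0$ whenever $f\in A^+$ vanishes at $1$; applied to $f(z)=1-z$ this gives the theorem. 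The key inputs are (i) spectral synthesis for $\{1\}\subseteq\TT$ in $A^+$, so that any such $f$ is the $\|\cdot\|_{A^+}$-limit of $g\in A^+$ vanishing in a neighbourhood $U$ of $1$ on $\TT$; and (ii) for such $g$, the set $K:=\sigma(T)\setminus U$ is compactly contained in $\DD$, which allows a holomorphic functional-calculus representation $\Phi(g)=(2\pi i)\inv\int_\Gamma g(z)R(z,T)\,\dd z$ along a contour $\Gamma$ lying in a compact subset of $\DD$ and enclosing $K$. Multiplying by $T^n$ and estimating gives $\|T^n\Phi(g)\|\le C(\max_{z\in\Gamma}|z|)^n\to 0$, and a density argument transfers the decay to all $f$ with $f(1)=0$.

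The main obstacle is in (ii): a general $g\in A^+$ is only continuous up to $\TT$, not holomorphic across it, so the Riesz--Dunford integral needs justification — either one extends $g$ across the arc $U$ (where it vanishes) by a reflection or Cauchy-type formula, or one replaces (ii) by a direct power-series identity that realizes the same operator without leaving $A^+$. The cleanest implementation, essentially that of Esterle--Strouse--Zouakia, bypasses these technicalities by passing to Gelfand theory on the closed subalgebra of $\B(X)$ generated by $T$ and $I$, identifying the relevant character space with a closed subset of $\overline\DD$ projecting onto $\sigma(T)$, and deducing the conclusion from spectral synthesis for $\{1\}$ in $A^+$ together with a Tauberian argument on the coefficients of $z^n(1-z)$.
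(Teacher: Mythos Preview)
The paper does not contain a proof of Theorem~\ref{thm:KT}; it is quoted in the introduction as the classical Katznelson--Tzafriri theorem with a citation to~\cite{KT86}, serving only as background for the quantified results that form the actual content of the paper. There is therefore no proof in the paper to compare your proposal against.

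That said, your necessity argument is correct, but the sufficiency sketch has a genuine gap in step~(i). You claim that any $f\in A^+$ with $f(1)=0$ can be approximated in the $A^+$-norm by functions $g\in A^+$ vanishing on a neighbourhood of $1$ in $\TT$. However, any $g\in A^+$ is the boundary function of a bounded analytic function on $\DD$, and if such a function vanishes on a subset of $\TT$ of positive measure then it is identically zero. Hence the only $g\in A^+$ vanishing on an arc is $g\equiv 0$, and the approximation you invoke is impossible. Spectral synthesis for singletons is a statement about the \emph{two-sided} Wiener algebra $A(\TT)$, not about $A^+$; transferring it to a functional calculus for a power-bounded but not necessarily invertible $T$ is exactly where the real work in the Katznelson--Tzafriri argument lies. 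Your closing paragraph gestures toward a Gelfand-theoretic route but does not carry it out, so as written the sufficiency direction remains unproved.
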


Here $\TT=\{\lambda\in\CC:|\lambda|=1\}$ denotes the unit circle and a bounded linear operator $T$ is said to be \emph{power-bounded} if $\sup_{n\ge0}\|T^n\|<\infty$. Originally a byproduct of Katznelson and Tzafriri's efforts to simplify earlier proofs of so-called \emph{zero-two laws} for Markov operators \cite{Fog76, OrSu70,Zah86}, Theorem~\ref{thm:KT} is now a cornerstone in the asymptotic theory of  discrete operator semigroups. In order to understand why the result is so useful, suppose that $T$ is a power-bounded linear operator on a reflexive Banach space $X$. By the mean ergodic theorem \cite[Sect.~2.1]{Kre85} the space $X$ splits as the direct sum $X=X_0\oplus X_1$ of the closed invariant subspaces $X_0=\Ker(I-T)$ and $X_1$, the latter being the closure of $\Ran(I-T)$. Moreover, the Ces\`aro averages $\frac1n\sum_{k=0}^{n-1}T^k$, $n\ge1$, of the discrete operator semigroup $(T^n)_{n\ge0}$ converge in the strong operator topology to the projection $P$ onto $X_0$ along $X_1$, that is
$$\bigg\|\frac1n\sum_{k=0}^{n-1}T^kx-Px\bigg\|\to0,\quad n\to\infty,$$
for all $x\in X$. Theorem~\ref{thm:KT} allows us to extend this convergence of the Ces\`aro averages to convergence of the  orbits $(T^nx)_{n\ge0}$  themselves. Indeed, if the assumptions of Theorem~\ref{thm:KT} are satisfied for a general Banach space $X$, then $\|T^nx\|\to0$ as $n\to\infty$ for all $x\in\Ran(I-T)$ and a straightforward approximation argument allows us to extend the conclusion to the closure of $\Ran(I-T)$. Thus if $X$ is once again assumed to be reflexive, and in particular if $X$ is a Hilbert space, then we may combine the Katznelson-Tzafriri theorem with the mean ergodic theorem to deduce that $\|T^nx-Px\|\to0$ for all $x\in X$. 

Observe that if $y$ is an element of the dense subspace $Y=\Ker(I-T)\oplus\Ran(I-T)$ of $X$, then there exists $x\in X$ such that $y=Py+(I-T)x$, and hence $T^ny-Py=T^n(I-T)x,$ $n\ge0$. In particular, having at one's disposal quantified versions of Theorem~\ref{thm:KT} makes it possible to deduce \emph{rates of convergence} of orbits originating at points $y\in Y$ towards their ergodic projection $Py$. We remark  that the case $\sigma(T)\cap\TT=\emptyset$ may be handled at once. Indeed, in this case the spectral radius $r(T)$ of $T$ satisfies $r(T)<1$ and we have $\|T^n\|=O(\alpha^n)$ as $n\to\infty$ for any $\alpha\in(r(T),1)$. In what follows we restrict our attention to the much more interesting case where $\sigma(T)\cap\TT=\{1\}$. As was shown in \cite{Sei15, Sei16}, the rate of convergence in Theorem~\ref{thm:KT} is determined by the rate at which the norms of the resolvent operators $R(e^{i\theta},T)=(e^{i\theta}-T)\inv$ blow up as $|\theta|\to0$. 

Let us consider the important special case of power-like or, as it has come to be called in the field, polynomial resolvent growth, which is to say that $\|R(e^{i\theta},T)\|\le C|\theta|^{-\alpha}$, $0<|\theta|\le\pi$, for some $C>0$ and $\alpha\ge1$. It was shown in \cite[Cor.~3.1]{Sei16} that 
\begin{equation}\label{eq:poly_ub}
\|T^n(I-T)\|=O\bigg(\frac{\log(n)^{1/\alpha}}{n^{1/\alpha}}\bigg),\quad n\to\infty.
\end{equation}
Moreover, if the resolvent bound is sharp then, under mild additional assumptions, $\|T^n(I-T)\|$ can decay no faster than $n^{-1/\alpha}$ as $n\to\infty$. These results hold on general Banach spaces. If $\alpha=1$ then $T$ is a so-called \emph{Ritt operator} and we in fact have the sharper upper bound $\|T^n(I-T)\|=O(n\inv)$ as $n\to\infty$; see \cite{Kom68} and also \cite[Th.~4.5.4]{Nev93} and \cite{Lyu99, NaZe99}. However, it was shown in \cite[Th.~3.6]{Sei16} that for $\alpha>2$ the upper bound in \eqref{eq:poly_ub} cannot in general be improved. On the other hand,  \cite[Th.~3.10]{Sei16} yields the sharp upper bound $\|T^n(I-T)\|=O(n^{-1/\alpha})$, $n\to\infty,$ for all $\alpha\ge1$ if $X$ is a Hilbert space. Earlier theoretical contributions to the topic of polynomial rates of decay in Theorem~\ref{thm:KT} may be found in the works of Dungey \cite{Dun08a, Dun11} and Nevanlinna \cite{Nev93, Nev97}. Both authors were at least partially motivated by concrete applications, which in Nevanlinna's case come from the theory of iterative methods, in Dungey's from the theory of random walks and Markov processes that had inspired the original discovery of Theorem~\ref{thm:KT}; see \cite{Dun07a,Dun07b, Dun08b} and also  \cite{CohLin16, CouSaC90}. Other applications arise for instance in the theory of alternating projections \cite{BadSei16, BadSei17}, the study of certain periodic evolution equations \cite{PauSei19} and even in investigations of infinite systems of coupled ordinary differential equations \cite{PauSei18}. Surveys touching on various aspects of the Katznelson-Tzafriri theorem and its quantified versions may be found in \cite{ChiTom07, Lek17}.

In view of the many important applications of the quantified Katznelson-Tzafriri theorem with polynomial growth, many of which arise in the Hilbert space setting, our main objective in the present paper is to present a substantial extension of the sharp polynomial decay estimate discussed above for operators acting on a Hilbert space. In particular, given a power-bounded operator $T$ on a Hilbert space $X$ with $\sigma(T)\cap\TT=\{1\}$ we would like to know for which continuous non-increasing functions $m\colon(0,\pi]\to(0,\infty)$ it is the case that $\|T^n(I-T)\|=O(m\inv(cn))$ as $n\to\infty$ for some $c>0$ if we have a resolvent bound of the form $\|R(e^{i\theta},T)\|\le m(|\theta|)$, $0<|\theta|\le\pi$. Here and in what follows, given a continuous non-increasing function $m\colon(0,\pi]\to(0,\infty)$ such that $m(\ep)\to\infty$ as $\ep\to0+$, we define a right-inverse $m\inv$ of $m$ by
$$m\inv(s)=\inf\{\ep\in(0,\pi]:m(\ep)\le s\},\quad s\ge m(\pi).$$ 
The above discussion of the polynomial case shows that we do obtain the optimal $m\inv$-rate of decay when $m(\ep)=C\ep^{-\alpha}$, $0<\ep\le\pi$, for some $C>0$ and $\alpha\ge1$. The main result of the present paper, Theorem~\ref{thm:pos_inc}, extends this statement to the much larger class of functions $m$ possessing a mild regularity property which we call \emph{reciprocally positive increase}. This class includes all functions of the polynomial type considered above, but it also contains many others, including in particular all relevant \emph{regularly varying} functions. As we show in Section~\ref{sec:Toep} by considering a special class of analytic Toeplitz operators, there are natural examples even of non-normal operators which fit into the framework of our main result and are not of polynomial type. We then show, in Proposition~\ref{prp:pos_inc_nec}, that for a large class of operators, including all normal operators, our condition of reciprocally positive increase is not only sufficient for the best possible $m\inv$-rate of decay to hold in Theorem~\ref{thm:KT} but also necessary. This shows that Theorem~\ref{thm:pos_inc} is optimal in a very natural sense. Finally, we show in Theorem~\ref{thm:quasi_mult} that for the class of so-called \emph{quasi-multiplication operators}, which again contains all normal operators, one obtains a more general result which requires no additional assumptions on the function $m$ but in the special case where $m$ has reciprocally positive increase recovers the optimal $m\inv$-rate of decay of Theorem~\ref{thm:pos_inc}. Our results may be viewed as discrete analogues of those recently obtained in \cite{RSS19} for bounded $C_0$-semigroups on Hilbert spaces, but despite the absence of unbounded operators the case of discrete operator semigroups poses a number of additional technical challenges of its own. 

Wherever possible, we use natural terminology and notation. This includes, in addition to what has already been introduced, various pieces of standard asymptotic notation and also the convention that $x\lesssim y$ for real quantities $x$ and $y$ signifies the existence of an implicit constant $C>0$ such that $x\le Cy$. We denote the open unit disk by $\DD$, and we let $\NN=\{1,2,3,4,\dots\}$ and $\ZZ_+=\NN\cup\{0\}$. The closure of a subset $\Omega$ of $\CC$ will be denoted by $\overline{\Omega}$, its boundary by $\partial\Omega$. We write $\B(X)$ for the algebra of bounded linear operators on a Banach space $X$, and given $T\in\B(X)$ we denote its spectrum by $\sigma(T)$ and its resolvent set by $\rho(T)$. All Banach and Hilbert spaces will implicitly be assumed to be complex.

\section{An optimal upper bound}

Let $T$ be a power-bounded operator acting on some Banach space $X$ and suppose that $\sigma(T)\cap\TT=\{1\}$. Suppose further that we are given a continuous non-increasing function $m\colon(0,\pi]\to(0,\infty)$ such that $\|R(e^{i\theta},T)\|\le m(|\theta|)$ for $0<|\theta|\le\pi$. It is shown in \cite[Th.~2.11]{Sei16} (see also \cite[Th.~2.5]{Sei15}) that for every $c\in(0,1)$ we have 
\begin{equation}\label{eq:mlog}
\|T^n(I-T)\|=O\big(\mlog\inv(cn)\big),\quad n\to\infty,
\end{equation}
 where
$$\mlog(\ep)=m(\ep)\log\left(1+\frac{m(\ep)}{\ep}\right),\quad 0<\ep\le\pi.$$

As was shown in \cite[Cor.~2.6]{Sei16}, this result is in some sense close to being optimal. We now present a variant of \cite[Cor.~2.6]{Sei16} (see also \cite[Rem.~2.7]{Sei16}), which is a discrete analogue of \cite[Prop.~5.4]{CPSST19} and proves that decay strictly faster than $m\inv(cn)$ for all $c>0$ as $n\to\infty$  is impossible provided $\|R(e^{i\theta},T)\|$ does not grow strictly more slowly than $m(|\theta|)$ as $\theta\to0$. Note that the condition on $m$ imposed here is weaker than that in \cite[Cor.~2.6]{Sei16}. Our conclusion is also weaker, but it contains precisely the optimality statement we require. Here and in what follows we say that a function $m\colon(0,\pi]\to(0,\infty)$ has \emph{reciprocally positive increase} if the map $s\mapsto m(s\inv)$, $s\ge\pi\inv$, has positive increase. Thus $m$ has reciprocally positive increase if and only if there exist constants $c\in(0,1]$, $\alpha>0$ and $\ep_0\in(0,\pi]$ such that
\begin{equation}\label{eq:pos_inc}
\frac{m(t\inv\ep)}{m(\ep)}\ge c t^\alpha,\quad t\ge1,\ 0<\ep\le\ep_0.
\end{equation}
The class of functions which have positive increase will play a central role throughout the remainder of this paper. It contains the polynomial-type functions $m(\ep)=C\ep^{-\alpha}$, $0<\ep\le\pi$, for all $C,\alpha>0$, but it also contains regularly varying functions of the form $m(\ep)=C\ep^{-\alpha}|\log(\ep)|^\beta$, $0<\ep\le\pi$, for $C,\alpha>0$, $\beta\in\RR$, and many others besides; see for instance \cite[Sect.~2]{BaChTo16}, \cite{BiGoTe87} and \cite[Sect.~2]{RSS19} for more details.

\begin{prp}\label{prp:lb}
Let $X$ be a Banach space and let $T\in\B(X)$ be a power-bounded operator such that $\sigma(T)\cap\TT=\{1\}$ and 
\begin{equation}\label{eq:cond}\liminf_{\theta\to0+}\max_{\sigma=\pm1}\theta\|R(e^{\sigma i\theta},T)\|>\sup_{n\ge0}\|T^n\|.
\end{equation}
Let $m\colon(0,\pi]\to(0,\infty)$ be a continuous non-increasing function such that $m(\ep)\to\infty$ as $\ep\to0+$ and 
\begin{equation}\label{eq:res_lb}
\limsup_{\theta\to0}\frac{\|R(e^{i\theta},T)\|}{m(|\theta|)}>0.
\end{equation}
Then there exists $c>0$ such that 
\begin{equation}\label{eq:op_lb}\limsup_{n\to\infty}\frac{\|T^n(I-T)\|}{m\inv(cn)}>0,
\end{equation}
and if $m$ has reciprocally positive increase then \eqref{eq:op_lb} holds for all $c>0$.
\end{prp}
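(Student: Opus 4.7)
The plan is to argue by contrapositive, exploiting a resolvent estimate derived from a Neumann-type expansion. Since $\sigma(T)\cap\TT=\{1\}$, Theorem~\ref{thm:KT} gives $\|T^n(I-T)\|\to 0$, so $a_n:=\sup_{k\ge n}\|T^k(I-T)\|$ is a non-increasing null sequence dominating $\|T^n(I-T)\|$. Suppose \eqref{eq:op_lb} fails. In the positive increase setting this means $a_n=o(m\inv(c_0 n))$ for some $c_0>0$; but \eqref{eq:pos_inc} forces $m\inv(cn)$ and $m\inv(c_0 n)$ to be comparable up to multiplicative constants depending on $c/c_0$, so equivalently $a_n=o(m\inv(cn))$ for every $c>0$. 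In the general case the negation of \eqref{eq:op_lb} already yields this for every $c>0$. Thus in both cases the working hypothesis is $a_n=o(m\inv(cn))$ for all $c>0$, and the aim is to contradict \eqref{eq:res_lb}.

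Starting from the finite Neumann identity
\begin{equation*}
R(e^{i\theta},T)=\sum_{n=0}^{N-1}e^{-i(n+1)\theta}T^n+e^{-iN\theta}T^N R(e^{i\theta},T),
\end{equation*}
multiplying on the right by $I-T$, and exploiting the commutation $TR(e^{i\theta},T)=R(e^{i\theta},T)T$ together with the elementary identity $R(e^{i\theta},T)(I-T)=I+(1-e^{i\theta})R(e^{i\theta},T)$, one obtains via the triangle inequality
\begin{equation*}
\bigl(|1-e^{i\theta}|-a_N\bigr)\|R(e^{i\theta},T)\|\le 1+\sum_{n=0}^{N-1}a_n\qquad\text{whenever } a_N<|1-e^{i\theta}|.
\end{equation*}
For $|\theta|$ small, choosing $N=N(\theta)$ minimal with $a_{N(\theta)}\le|1-e^{i\theta}|/2$ and using $|1-e^{i\theta}|\ge 2|\theta|/\pi$ produces the workhorse bound $\|R(e^{i\theta},T)\|\le\pi(1+\sum_{n<N(\theta)}a_n)/|\theta|$.

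It then remains to match asymptotics. Fixing $c>0$ and then $\eta>0$, one has $a_n\le\eta m\inv(cn)$ for $n\ge n_0$; the minimality of $N(\theta)$ together with $a_{N(\theta)-1}>|1-e^{i\theta}|/2\ge|\theta|/\pi$ and the monotonicity of $m$ forces $N(\theta)\lesssim m(|\theta|)/c$. Under \eqref{eq:pos_inc} an integral comparison yields $\sum_{n<N(\theta)}a_n\lesssim\eta N(\theta) m\inv(cN(\theta))\lesssim\eta|\theta|m(|\theta|)$, and substituting this into the workhorse bound and sending $\eta\to 0$ along the subsequence $\theta_k\to 0$ furnished by \eqref{eq:res_lb} produces $\|R(e^{i\theta_k},T)\|=o(m(|\theta_k|))$, contradicting \eqref{eq:res_lb}. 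The main technical obstacle is the marginal regime where $|\theta|m(|\theta|)$ stays bounded (as in the Ritt case $m(\ep)\asymp\ep\inv$), in which the baseline $\|R(e^{i\theta},T)\|\gtrsim|\theta|\inv$ already matches $m(|\theta|)$ up to constants. It is precisely here that \eqref{eq:cond} plays its crucial role: the strict inequality with $\sup_{n\ge0}\|T^n\|$ excludes degeneracies such as $T=I$ (for which \eqref{eq:op_lb} would hold vacuously) and ensures that the resolvent lower bound underlying \eqref{eq:res_lb} genuinely improves upon the automatic $|\theta|\inv$ baseline.
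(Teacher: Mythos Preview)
Your approach is genuinely different from the paper's. The paper does not argue by contrapositive; instead it invokes \cite[Cor.~2.6 and Rem.~2.7]{Sei16}, which directly gives a lower bound $\|T^n(I-T)\|\ge c_0\, m_0^{-1}(C_0 n)$ for the function $m_0(\ep)=\sup_{\ep\le|\theta|\le\pi}\|R(e^{i\theta},T)\|$, and then transfers this to $m$ along a subsequence built from~\eqref{eq:res_lb}. The role of hypothesis~\eqref{eq:cond} is absorbed entirely into that citation.

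Your contrapositive strategy via the Neumann identity is a reasonable idea, but the argument as written has real gaps. The most serious one is the ``integral comparison'' step
\[
\sum_{n<N(\theta)} a_n \;\lesssim\; \eta\, N(\theta)\, m^{-1}\!\big(cN(\theta)\big).
\]
Under reciprocally positive increase with exponent $\alpha$ one only obtains $m^{-1}(cn)\le C(N/n)^{1/\alpha}m^{-1}(cN)$, and the resulting bound $\sum_{n\le N}m^{-1}(cn)\lesssim N m^{-1}(cN)$ holds \emph{only} when $\alpha>1$. For $m(\ep)=C\ep^{-1}$ one has $\sum_{n<N}m^{-1}(cn)\asymp c^{-1}\log N$ while $N m^{-1}(cN)\asymp c^{-1}$, and for $m(\ep)=\ep^{-1}\log(1/\ep)$ the discrepancy is by a full factor of $\log N$; in both cases the displayed estimate fails and the subsequent bound on $\|R(e^{i\theta},T)\|/m(|\theta|)$ does not tend to zero. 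These are precisely functions of reciprocally positive increase, so the second assertion of the proposition is not established.

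Two further points. First, the constant $1$ in your workhorse bound contributes $\pi/|\theta|$ to the upper bound for $\|R(e^{i\theta},T)\|$, and after dividing by $m(|\theta|)$ this leaves $\pi/(|\theta|m(|\theta|))$, which does not vanish in the regime $m(\ep)\asymp\ep^{-1}$. You flag this and say that~\eqref{eq:cond} ``plays its crucial role'' here, but you never actually \emph{use}~\eqref{eq:cond} in the argument; the final paragraph is a description of why the hypothesis is needed, not a proof step. Second, the first assertion of the proposition (existence of \emph{some} $c>0$) does not assume positive increase at all, yet your only route to a contradiction passes through the integral comparison, which explicitly invokes~\eqref{eq:pos_inc}. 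So the general case is not covered.
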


\begin{rem}
If $T=I$ and $m(\ep)=\ep\inv$, $0<\ep\le\pi$, then all assumptions of Proposition~\ref{prp:lb} are satisfied except for \eqref{eq:cond}, and the conclusion \eqref{eq:op_lb} does not hold. Hence the  condition in \eqref{eq:cond} cannot in general be omitted.
\end{rem}

\begin{proof}[Proof of Proposition~\ref{prp:lb}]
Define $m_0\colon(0,\pi]\to(0,\infty)$ by 
$$m_0(\ep)=\sup_{\ep\le|\theta|\le\pi}\|R(e^{i\theta},T)\|,\quad 0<\ep\le\pi.$$ 
Then by \cite[Cor.~2.6 and Rem.~2.7]{Sei16} there exist constants $C_0,c_0>0$ such that $\|T^n(I-T)\|\ge c_0 m_0\inv(C_0 n)$ for all sufficiently large $n\ge1$.  By \eqref{eq:res_lb} we may find $\theta_n\in[-\pi,\pi]$, $n\ge1$, and $c_1>0$ such that $|\theta_n|\to0$ as $n\to\infty$ and $\|R(e^{i\theta_n},T)\|>c_1m(|\theta_n|)$ for all $n\ge1$. In particular, we have
$$m_0(|\theta_n|)>c_1m(|\theta_n|)=m_0\big(m_0\inv(c_1m(|\theta_n|))\big), \quad n\ge1,$$
and hence $|\theta_n|< m_0\inv(c_1m(|\theta_n|))$, $n\ge1$. When $n$ is sufficiently large we may find positive integers $k_n$ such that $C_0k_n\le c_1m(|\theta_n|)\le C_0(k_n+1)$. In particular, we may find $n_0\in\NN$ and $b_n\in[C_0,2C_0)$, $n\ge n_0$, such that $c_1m(|\theta_n|))=b_nk_n$ for all $n\ge n_0$. Note in particular that $k_n\to\infty$ as $n\to\infty$, and moreover $m\inv(c_1\inv b_nk_n)\le|\theta_n|$ for all $n\ge n_0$. Let $c=2C_0c_1\inv$. Then
$$m\inv(ck_n)\le m\inv(c_1\inv b_nk_n)\le m_0\inv(c_1m(|\theta_n|))\le m_0\inv(C_0k_n),\quad n\ge n_0,$$
and making $n_0$ larger if necessary we deduce that
$$\|T^{k_n}(I-T)\|\ge c_0 m\inv(ck_n), \quad n\ge n_0,$$
which proves \eqref{eq:op_lb}. The final statement follows from~\cite[Prop.~2.2]{RSS19}.
\end{proof}

Since $\mlog$ grows faster than $m$ there is in general a discrepancy between the upper bound in \eqref{eq:mlog} and the lower threshold implied by \eqref{eq:op_lb}. The question is: when can we improve the upper bound to $\|T^n(I-T)\|=O(m\inv(cn))$, $n\to\infty$, for some $c>0$? If $X$ is finite-dimensional, then either from the mean ergodic theorem or simply by considering $T$ in Jordan normal form  it is straightforward to see that $\|T^n(I-T)\|=O(\alpha^n)$ as $n\to\infty$ for some $\alpha\in[0,1)$. Since 
$$m(\ep)\ge\|R(e^{i\ep},T)\|\ge\frac{1}{\dist(e^{i\ep},\sigma(T))}\ge\frac{1}{|e^{i\ep}-1|}\ge\frac{1}{\ep},\quad 0<\ep\le\pi,$$
we have $ m\inv(cn)\ge c\inv n\inv$ for all $c>0$ and all  $n\ge1$, and hence in particular $\|T^n(I-T)\|=O(m\inv(cn))$, $n\to\infty$, for all $c>0$. So the question is of real interest only in the setting of infinite-dimensional Banach spaces. Let us return to the important special case where $\|R(e^{i\theta},T)\|\le C|\theta|^{-\alpha}$, $0<|\theta|\le\pi$, for some constants $C>0$ and $\alpha\ge1$. In this case \eqref{eq:mlog} implies the decay estimate \eqref{eq:poly_ub}, and Proposition~\ref{prp:lb} shows that if \eqref{eq:cond} holds and  
$$\limsup_{\theta\to0}|\theta|^{\alpha}\|R(e^{i\theta},T)\|>0,$$ 
then 
$$\limsup_{n\to\infty}n^{1/\alpha}\|T^n(I-T)\|>0.$$
Thus the upper bound in \eqref{eq:poly_ub} is  close to being sharp, and it is never suboptimal by more than a logarithmic factor. As discussed in Section~\ref{sec:intro}, the upper bound in \eqref{eq:poly_ub} is optimal in general but it may be improved to $\|T^n(I-T)\|=O(n^{-1/\alpha})$ as $n\to\infty$ if either $\alpha=1$ (so that $T$ is a Ritt operator) or, crucially for our purposes, if $X$ is a Hilbert space. Theorem~\ref{thm:pos_inc} below, the main result of this paper, is a vast generalisation of the latter statement. It yields the optimal $m\inv$-rate of decay for operators on a Hilbert space under the much milder assumption that the resolvent bound $m$ has reciprocally positive increase. In some ways the result may be viewed as a discrete analogue of \cite[Th.~3.2, 3.6 and~3.9]{RSS19}, but there are additional technical obstacles which are specific to our setting. To overcome these we combine the ideas used in the proofs in \cite{RSS19} with various new ideas, some of which go back to \cite{ChiSei16, Sei15}. We point out that, while any resolvent bound $m$ which has reciprocally positive increase is permitted in the theorem, functions of regularly varying type, whose growth near zero is close to being polynomial, are of especial interest in the present context. Functions which blow up much faster than polynomially will typically also have positive increase (provided the blow-up occurs sufficiently regularly), but in this case the difference in the asymptotic behaviour of $m\inv$ and $\mlog\inv$ usually disappears. 

\begin{thm}\label{thm:pos_inc}
Let $X$ be a Hilbert space and suppose that $T\in\B(X)$ is a power-bounded operator such that $\sigma(T)\cap\TT=\{1\}$. Suppose further that $m\colon(0,\pi]\to(0,\infty)$ is a continuous non-increasing function of reciprocally positive increase such that $\|R(e^{i\theta},T)\|\le m(|\theta|)$ for $0<|\theta|\le\pi$. Then
$$\|T^n(I-T)\|=O\big(m\inv(n)\big),\quad n\to\infty.$$
\end{thm}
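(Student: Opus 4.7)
The plan is to adapt the contour-integral methodology of Rozendaal--Seifert--Stahn \cite{RSS19} for bounded $C_0$-semigroups to the present discrete setting, replacing vertical-line contours with circles of radius slightly greater than $1$. Fix a large $n$, set $\ep_n = m\inv(n)$, and take the contour radius $r_n = 1 + 1/n$, so that $r_n^{n+1}$ remains uniformly bounded. The identity $R(z,T)(I-T) = I - (z-1)R(z,T)$, combined with the Cauchy integral formula and the fact that $\oint z^n\,dz = 0$ for $n \ge 1$, yields the representation
\begin{equation*}
T^n(I-T) = -\frac{r_n^{n+1}}{2\pi}\int_{-\pi}^\pi (r_n e^{i\theta} - 1) e^{i(n+1)\theta} R(r_n e^{i\theta}, T)\,d\theta.
\end{equation*}
A standard Neumann series argument transfers the given bound on the unit circle to $\|R(r_n e^{i\theta}, T)\| \lesssim m(|\theta|)$, valid whenever $m(|\theta|)(r_n - 1) \lesssim 1$, i.e., for $|\theta| \gtrsim \ep_n$ by reciprocally positive increase.

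The Hilbert-space improvement enters through the duality $\|T^n(I-T)\| = \sup_{\|x\|,\|y\|\le 1}|\langle T^n(I-T) x, y\rangle|$ and the Parseval identity
\begin{equation*}
\int_{-\pi}^\pi \|R(r_n e^{i\theta}, T) x\|^2\,d\theta = 2\pi \sum_{k\ge 0} r_n^{-2k-2}\|T^k x\|^2 \lesssim n\|x\|^2,
\end{equation*}
along with its analogue for $T^*$, which is also power-bounded with $\sigma(T^*)\cap\TT = \{1\}$ and enjoys the same resolvent bound. Pairing the representation of $T^n(I-T)$ against $x$ and $y$, applying Cauchy--Schwarz in $\theta$, and decomposing $\{|\theta| \ge \ep_n\}$ dyadically into annuli $A_j = \{2^j\ep_n \le |\theta| < 2^{j+1}\ep_n\}$ (on each of which $\|R\| \lesssim m(2^j\ep_n)$ and $|z - 1| \lesssim 2^j\ep_n$) yields a sum of local contributions that forms a geometric series in $j$. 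Reciprocally positive increase of $m$ guarantees convergence of this series with total of order $\ep_n = m\inv(n)$.

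The main obstacle, and the place where ideas going back to \cite{ChiSei16, Sei15} must supplement those of \cite{RSS19}, is the contribution from the near region $|\theta| \le \ep_n$, where the transferred resolvent bound is unavailable and a direct Cauchy--Schwarz-plus-Parseval estimate loses a factor too large to absorb. The plan is to exploit the rewriting $(z-1)R(z,T) = I - (I-T) R(z,T)$ to split this near-contribution: the $I$ piece produces a contribution of size $O(1/n)$ via oscillation of $e^{i(n+1)\theta}$, which is harmless since $m\inv(n) \ge 1/n$; the remaining part involves $(I-T) R(z,T)$ and admits the refined Parseval identity
\begin{equation*}
\int_{-\pi}^\pi \|(I-T) R(r_n e^{i\theta}, T) x\|^2\,d\theta = 2\pi \sum_{k\ge 0} r_n^{-2k-2}\|T^k (I-T) x\|^2.
\end{equation*}
Combining this with an a priori bound on $\|T^k(I-T)\|$ --- the $\mlog\inv$ estimate \eqref{eq:mlog} valid on any Banach space, or a self-improving bootstrap exploiting reciprocally positive increase --- should close the argument. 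The hardest part will be orchestrating the interplay between the three scales $r_n - 1 = 1/n$, $\ep_n$, and the growth rate of $m$, and iterating the bootstrap carefully enough to remove any residual logarithmic losses.
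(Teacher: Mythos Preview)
Your contour-integral plan is a natural discretisation of the semigroup argument, but the far-region step has a genuine gap. You assert that pairing against $x,y$, applying Cauchy--Schwarz in $\theta$, and summing dyadically over annuli $A_j$ produces a geometric series with total $O(\ep_n)$. With only a \emph{single} power of the resolvent in the representation $T^n(I-T)=-\frac{r_n^{n+1}}{2\pi}\int(z-1)e^{i(n+1)\theta}R(z,T)\,\dd\theta$ this is not so: the scalar $\langle R(z,T)x,y\rangle$ does not factor as a product involving both $\|R(z,T)x\|$ and $\|R(\bar z,T^*)y\|$, so Parseval can be applied on at most one side. Using Parseval on one factor and the measure of $A_j$ on the other gives an annular contribution of order $(2^j\ep_n)^{3/2}n^{1/2}$, which diverges in $j$; using only the pointwise bound $\|R\|\lesssim m(2^j\ep_n)$ recovers nothing better than the Banach-space estimate. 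More fundamentally, reciprocally positive increase provides an exponent $\alpha>0$ that may be strictly less than $1$, and then even the pointwise integrand $|z-1|\,\|R(z,T)\|\sim|\theta|m(|\theta|)$ \emph{increases} on the far region, so no dyadic summation of the type you describe can succeed. The bootstrap you propose for the near region cannot rescue this: feeding the refined Parseval identity into a direct Cauchy--Schwarz bound yields $\omega(n)^2\lesssim\sum_{k\le n}\omega(k)^2$, which does not self-improve.

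The paper's proof confronts exactly this obstruction and resolves it by arranging for the $\ell$-th power of the resolvent to appear, with $\ell=\lceil\alpha^{-1}\rceil$, so that the relevant pointwise quantity becomes $|\theta|\,m(|\theta|)^\ell$, which positive increase renders maximal at $|\theta|=\ep$. It does not use a contour off the unit circle; instead it introduces iterated Ces\`aro-type weights $s_\ell$ and the identity
\[
T^n(I-T)x=\frac{1}{s_{\ell+1}(n)}\sum_{k=1}^nT^{n-k}\,s_\ell(k)T^k(I-T)x,
\]
whose discrete Fourier transform produces $R(e^{i\theta},T)^\ell(I-T)$ as a multiplier. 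After a smooth frequency cutoff at scale $\ep$, the high-frequency piece is controlled by a \emph{single} application of Parseval together with H\"older, giving $\ep\,(m(\ep)/n)^\ell$; the low-frequency piece is handled not by a bootstrap but by $\ell$ rounds of summation by parts on the scalar cutoff kernel, giving $\ep\sum_{j=0}^\ell(\ep n)^{-j}$. Both are $O(m^{-1}(n))$ once $\ep=m^{-1}(n)$. Your approach could in principle be repaired by performing $\ell$ integrations by parts of $e^{i(n+1)\theta}$ against $R$ in the contour integral to generate $R^{\ell+1}$, then splitting one factor onto $x$, one onto $y$ via the adjoint, and the rest pointwise; but this produces a cascade of lower-order terms to track and is a substantial addition not present in the plan as written.
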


\begin{proof}
For the time being we fix $n\in\NN$ and $x\in X$. For $\ell\in\ZZ_+$ and $k\in\ZZ$ let us inductively define non-negative integers $s_\ell(k)$ by $s_0(k)=\chi_{\{1,\dotsc,n\}}(k)$ and $s_{\ell+1}(k)=\smash{\sum_{j=1}^k s_{\ell}(j)}$. In particular, $s_\ell(k)=0$ whenever $k\le0$ and $s_\ell(k)\le\smash{\binom{\ell+k-1}{\ell}}$ for all $k\ge1$, with equality for $1\le k\le n$. Let us take $\ell\ge1$ to be a fixed number, the exact value of which will be chosen in due course, and note that
\begin{equation}\label{eq:start}
T^n(I-T)x=\frac{1}{s_{\ell+1}(n)}\sum_{k=1}^nT^{n-k}h_\ell(k),
\end{equation}
where we define $h_\ell\colon\ZZ\to X$ by $h_\ell(k)=0$ for $k\le0$ and $h_\ell(k)=s_\ell(k)T^k(I-T)x$ for $k\ge1$. Let $\psi\colon\RR\to \RR$ be a smooth function such that $\psi(\theta)=0$ for $|\theta|\le1$, $0\le\psi(\theta)\le1$ for $1\le|\theta|\le2$ and $\psi(\theta)=1$ for $|\theta|\ge2$. Furthermore, for $\ep\in(0,1]$ let $\psi_\ep(\theta)=\psi(\ep\inv\theta)$, $-\pi\le\theta\le\pi$, and define the sequences $y_\ep,z_\ep\in\ell^2(\ZZ)$ by
$$y_\ep(k)=\frac{1}{2\pi}\int_{-\pi}^\pi e^{ik\theta}\psi_\ep(\theta)\,\dd\theta\quad \mbox{and}\quad z_\ep(k)=\frac{1}{2\pi}\int_{-\pi}^\pi e^{ik\theta}(1-\psi_\ep(\theta))\,\dd\theta$$
for all $k\in\ZZ$. Since $\psi$ is smooth the sequences $y_\ep, z_\ep$ are in fact rapidly decaying, so we obtain well-defined convolutions $h_\ell*y_\ep,h_\ell*z_\ep\colon\ZZ\to X$. Furthermore, we have the splitting $h_\ell=h_\ell*y_\ep+h_\ell*z_\ep$, which in turn induces a natural splitting in \eqref{eq:start}. Our aim in the remainder of this proof is to estimate in turn the norms of the two terms
\begin{equation}\label{eq:two_terms}
\frac{1}{s_{\ell+1}(n)}\sum_{k=1}^nT^{n-k}(h_\ell*y_\ep)(k)\quad\mbox{and}\quad \frac{1}{s_{\ell+1}(n)}\sum_{k=1}^nT^{n-k}(h_\ell*z_\ep)(k).
\end{equation}

We begin by estimating the first term. By H\"older's inequality we have
\begin{equation}\label{eq:Holder}
\bigg\|\frac{1}{s_{\ell+1}(n)}\sum_{k=1}^nT^{n-k}(h_\ell*y_\ep)(k)\bigg\|\le \frac{Kn^{1/2}}{s_{\ell+1}(n)}\|h_\ell*y_\ep\|_{\ell^2(\ZZ;X)},
\end{equation}
where $K=\sup_{k\ge0}\|T^k\|$. We now estimate the $\ell^2$-norm of $h_\ell*y_\ep$. Let $\F\colon\ell^2(\ZZ;X)\to L^2(-\pi,\pi;X)$ denote the  discrete-time Fourier transform, defined for $h\in\ell^2(\ZZ;X)\cap\ell^1(\ZZ;X)$ by
$$(\F h)(\theta)=\sum_{k\in\ZZ}e^{-ik\theta}h(k),\quad -\pi\le\theta\le\pi.$$
Since $X$ is a Hilbert space, the operator $(2\pi)^{-1/2}\F$ is unitary by Parseval's theorem, and the inverse of $\F$ is given by
$$(\F\inv \eta)(k)=\frac{1}{2\pi}\int_{-\pi}^\pi e^{ik\theta}\eta(\theta)\,\dd\theta,\quad \eta\in L^2(-\pi,\pi;X),\ k\in\ZZ.$$ 
For $r>1$ let us define $h_{\ell,r}\colon\ZZ\to X$ by $h_{\ell,r}(k)=r^{- k}h_\ell(k)$, $k\in\ZZ$, and define $T_r\colon\ZZ\to\B(X)$ by $T_r(k)=0$ for $k<0$ and $T_r(k)=r^{-k}T^k$ for $k\ge0$. A straightforward calculation shows that
$h_{\ell,r}=T_r*h_{\ell-1,r}$, and hence $h_{\ell,r}=T_r^{*\ell}*h_{0,r}.$ Using the fact that 
$$\sum_{k=0}^\infty e^{-ik\theta}T_r(k)=re^{i\theta}R(re^{i\theta},T),\quad -\pi\le\theta\le\pi,$$
we have
$$(\F h_{\ell,r})(\theta)=r^\ell e^{i\ell\theta}R(re^{i\theta},T)^\ell(\F h_{0,r})(\theta)=R_{\ell,r}(\theta)(\F g_{r})(\theta)$$
for $-\pi\le\theta\le\pi$, where $R_{\ell,r}(\theta)=r^\ell e^{i\ell\theta}R(re^{i\theta},T)^\ell(I-T)$ for $-\pi\le\theta\le\pi$ and where $g_r$ is defined by $g_r(k)=0$ when $k<0$ and $g_r(k)=r^{- k}s_0(k)T^kx$ for $k\ge0$. If we moreover let $R_{\ell}(\theta)=e^{i\ell\theta}R(e^{i\theta},T)^\ell(I-T)$ for $-\pi\le\theta\le\pi$ and define $g$ by $g(k)=0$ for $k<0$ and $g(k)=s_0(k)T^kx$ for $k\ge0$, then for each $\theta\in[-\pi,\pi]$ we have $R_{\ell,r}(\theta)\to R_{\ell}(\theta)$ in $\B(X)$ and $(\F g_{r})(\theta)\to(\F g)(\theta)$ as $r\to1+$. Recalling that $\psi_\ep(\theta)=0$ for $|\theta|\le \ep$ it follows using the dominated convergence theorem that, for each $k\in\ZZ$,
$$\begin{aligned}
(h_\ell*y_\ep)(k)&=\lim_{r\to1+}(h_{\ell,r}*y_\ep)(k)=\lim_{r\to1+} \big(\F\inv (\psi_\ep R_{\ell,r}\,\F g_{r})\big)(k)\\
&=\lim_{r\to1+}\frac{1}{2\pi}\int_{\ep\le|\theta|\le\pi}e^{ik\theta}\psi_\ep (\theta)R_{\ell,r}(\theta)(\F g_{r})(\theta)\,\dd\theta\\
&=\frac{1}{2\pi}\int_{\ep\le|\theta|\le\pi}e^{ik\theta}\psi_\ep (\theta)R_{\ell}(\theta)(\F g)(\theta)\,\dd\theta\\
&=\big(\F\inv (\psi_\ep R_{\ell}\,\F g)\big)(k),
\end{aligned}$$
which is to say that $h_\ell*y_\ep=\F\inv (\psi_\ep R_{\ell}\,\F g)$. Hence by Parseval's theorem 
$$\|h_\ell*y_\ep\|_{\ell^2(\ZZ;X)}\le \|\psi_\ep R_{\ell}\|_{L^\infty(-\pi,\pi;\B(X))}\|g\|_{\ell^2(\ZZ;X)}.$$
It is clear that $\|g\|_{\ell^2(\ZZ;X)}\le Kn^{1/2}\|x\|$, so it remains to estimate the $L^\infty$-norm of $\psi_\ep R_{\ell}$. We note first that since $\|\psi_\ep\|_{L^\infty(-\pi,\pi)}\le1$ and
$\|R(e^{i\theta},T)\|\ge|\theta|\inv$ for $0<|\theta|\le\pi,$
we have $\|R_\ell(\theta)\|\le 2|\theta|\|R(e^{i\theta},T)\|^\ell,$ $0<|\theta|\le\pi$, and hence
$$\|\psi_\ep R_{\ell}\|_{L^\infty(-\pi,\pi; \B(X))}\le 2 \sup_{\ep\le |\theta| \le\pi} |\theta| \,m( |\theta|)^\ell.$$
Since $m$ by assumption has reciprocally positive increase there exist $c\in(0,1]$, $\alpha>0$ and $\ep_0\in(0,\pi]$ such that \eqref{eq:pos_inc} holds. We now make the choice $\ell=\lceil\alpha\inv\rceil$. A brief calculation using the fact that $\sup_{\ep_0\le|\theta|\le\pi}\|R(e^{i\theta},T)\|<\infty$ yields
$$\|\psi_\ep R_{\ell}\|_{L^\infty(-\pi,\pi; \B(X))}\lesssim\ep\bigg(\frac{m(\ep)}{c}\bigg)^\ell,$$
and combining this with \eqref{eq:Holder} we find that 
\begin{equation}\label{eq:first_term_est}
\frac{1}{s_{\ell+1}(n)}\sum_{k=1}^nT^{n-k}(h_\ell*y_\ep)(k)\lesssim \ep\|x\|\bigg(\frac{m(\ep)}{cn}\bigg)^\ell,
\end{equation}
where the implicit constants are independent of $n$, $\ep$, and $x$. Here we have used the fact that 
$$s_{\ell+1}(n)=\binom{\ell+n}{\ell+1}\ge\frac{n^{\ell+1}}{(\ell+1)!}.$$

We now turn to the second term in~\eqref{eq:two_terms}. Define $H_\ell\colon\ZZ\to X$ by $H_\ell(k)=\sum_{j=1}^kh_\ell(j)$. In particular, $H_\ell(k)=0$ for $k\le0$ and for $k\ge1$ we have
$$H_\ell(k)=\sum_{j=1}^ks_{\ell-1}(j)T^{j}x-s_\ell(k)T^{k+1}x,$$
so that $\|H_\ell(k)\|\le 2Ks_\ell(k)\|x\|$, $k\ge1$, where $K$ is as above. Thus
$$\begin{aligned}
\|(h_\ell*z_\ep)(k)\|&=\bigg\|\sum_{j=1}^\infty H_\ell(j)\big(z_\ep(k-j)-z_\ep(k-j-1)\big)\bigg\|\\&\le2K\|x\|(s_\ell*\Delta_\ep)(k),\quad k\in\ZZ,
\end{aligned}$$
where $\Delta_\ep(k)=|z_\ep(k)-z_\ep(k-1)|$, $k\in\ZZ$. For $p\in\ZZ_+$ let us define $\Delta_{\ep,p}$ by $\Delta_{\ep,0}=\Delta_\ep$ and 
$$\Delta_{\ep,p+1}(k)=\begin{cases}
\sum_{j=-\infty}^k\Delta_{\ep,p}(j), & k<0,\\
-\sum_{j=k+1}^\infty\Delta_{\ep,p}(j), &k\ge0.
\end{cases}$$
Then
$$\Delta_{\ep,p+1}(k)-\Delta_{\ep,p+1}(k-1)=\begin{cases}
\Delta_{\ep,p}(k), & k\in\ZZ\setminus\{0\},\\
\Delta_{\ep,p}(0)-\langle\Delta_{\ep,p}\rangle, & k=0,
\end{cases}$$
where $\langle\Delta_{\ep,p}\rangle=\sum_{k\in\ZZ}\Delta_{\ep,p}(k)$, $p\ge0$. A standard argument using summation by parts shows that 
$$(s_\ell*\Delta_\ep)(k)=(s_{\ell-1}*\Delta_{\ep,1})(k)+s_\ell(k)\langle\Delta_\ep\rangle,\quad k\in\ZZ,$$ 
and likewise, by induction,
$$(s_\ell*\Delta_\ep)(k)=(s_{0}*\Delta_{\ep,\ell})(k)+\sum_{j=0}^{\ell-1} s_{\ell-j}(k)\langle\Delta_{\ep,j}\rangle,\quad k\in\ZZ.$$
Thus
$$|(s_\ell*\Delta_\ep)(k)|\le \sum_{j=0}^{\ell} s_{\ell-j}(k)\|\Delta_{\ep,j}\|_{\ell^1(\ZZ)},\quad 1\le k\le n,$$
and it follows that 
\begin{equation}\label{eq:second_term_est}
\begin{aligned}
\bigg\|\frac{1}{s_{\ell+1}(n)}\sum_{k=1}^nT^{n-k}(h_\ell*z_\ep)(k)\bigg\|&\lesssim \|x\|\sum_{j=0}^\ell \frac{s_{\ell-j+1}(n)}{s_{\ell+1}(n)}\|\Delta_{\ep,j}\|_{\ell^1(\ZZ)}\\&\lesssim  \|x\|\sum_{j=0}^\ell \frac{\|\Delta_{\ep,j}\|_{\ell^1(\ZZ)}}{n^j},
\end{aligned}
\end{equation}
with implicit constants which are again independent of $n$, $\ep$ and $x$. It remains to bound the $\ell^1$-norms of $\Delta_{\ep,j}$ for $0\le j\le\ell$. To this end we begin by observing that
$$z_\ep(k)-z_\ep(k-1)=\frac{1}{2\pi}\int_{-\pi}^\pi e^{ik\theta}(1-e^{-i\theta})\varphi(\ep\inv\theta)\,\dd\theta,\quad k\in\ZZ,$$
where $\varphi=1-\psi$. In particular, $\varphi(\theta)=0$ for $|\theta|\ge2$, and hence
\begin{equation}\label{eq:zero_est}
\Delta_\ep(k)\le\frac{1}{2\pi}\int_{-2\ep}^{2\ep}|\theta|\,\dd\theta=\frac{2\ep^2}{\pi},\quad k\in\ZZ.
\end{equation}
Furthermore, integrating by parts $j\ge1$ times we obtain, for $k\in\ZZ\setminus\{0\}$,
$$\int_{-\pi}^\pi e^{ik\theta}(1-e^{-i\theta})\varphi(\ep\inv\theta)\,\dd\theta=\frac{(-1)^j}{(ik)^j}\int_{-\pi}^\pi e^{ik\theta}\frac{\dd^j}{\dd\theta^j}\big((1-e^{-i\theta})\varphi(\ep\inv\theta)\big)\,\dd\theta,$$
and we have
$$\begin{aligned}
\frac{\dd^j}{\dd\theta^j}\big((1-e^{-i\theta})\varphi(\ep\inv\theta)\big)=(1&-e^{-i\theta})\frac{\varphi^{(j)}(\ep\inv\theta)}{\ep^j}\\&-\sum_{p=0}^{j-1}\binom{j}{p}(-i)^{j-p}e^{-i\theta}\frac{\varphi^{(p)}(\ep\inv\theta)}{\ep^p}.
\end{aligned}$$
Now straightforward estimates show that for each fixed $j\ge1$ we have  
\begin{equation}\label{eq:j_est}
|\Delta_\ep(k)|\lesssim \frac{\ep^{2}}{\ep^jk^{j}},\quad k\in\ZZ\setminus\{0\},\ \ep\in(0,1].
\end{equation}
We now use  \eqref{eq:j_est} with $j=2$ and combine it with \eqref{eq:zero_est} to bound the $\ell^1$-norm of $\Delta_\ep$ by
$$\|\Delta_\ep\|_{\ell^1(\ZZ)}\lesssim \sum_{|k|\le\ep\inv}\ep^2+\sum_{|k|>\ep\inv}\frac{1}{k^2}\lesssim\ep.$$
This in turn yields $|\Delta_{\ep,1}(k)|\lesssim\|\Delta_{\ep,0}\|_{\ell^1(\ZZ)}\lesssim\ep$, $k\in\ZZ$. Using \eqref{eq:j_est} with $j=3$ we find that, for $k\ge0$, 
$$|\Delta_{\ep,1}(k)|\lesssim\sum_{j=k+1}^\infty|\Delta_{\ep,0}(k)|\lesssim\ep\inv\sum_{j=k+1}^\infty\frac{1}{j^3}\lesssim\frac{\ep\inv}{(k+1)^2},$$
and similarly $|\Delta_{\ep,1}(k)|\lesssim\ep\inv k^{-2}$ for $k<0$. Hence
\begin{equation}\label{eq:Delta_1_est}
\|\Delta_{\ep,1}\|_{\ell^1(\ZZ)}\lesssim \sum_{|k|\le\ep\inv}\ep+\sum_{|k|>\ep\inv}\frac{\ep\inv}{k^2}\lesssim1.
\end{equation}
Likewise, starting with \eqref{eq:j_est} for $j=4$ we may show that $|\Delta_{\ep,2}(k)|\lesssim \ep^{-2}k^{-2}$ for $k\in\ZZ\setminus\{0\}$, and combining this with \eqref{eq:Delta_1_est} yields $\|\Delta_{\ep,2}\|_{\ell^1(\ZZ)}\lesssim\ep\inv$. Iterating this process we see that $\|\Delta_{\ep,j}\|_{\ell^1(\ZZ)}\lesssim\ep^{1-j}$ for $0\le j\le\ell$, and plugging this into \eqref{eq:second_term_est} we obtain
\begin{equation}\label{eq:second_term_final_est}
\bigg\|\frac{1}{s_{\ell+1}(n)}\sum_{k=1}^nT^{n-k}(h_\ell*z_\ep)(k)\bigg\|\lesssim  \ep\|x\|\sum_{j=0}^\ell \frac{1}{\ep^jn^j}.
\end{equation}

Combining \eqref{eq:second_term_final_est} with \eqref{eq:first_term_est} and recalling \eqref{eq:start} we find that
\begin{equation}\label{eq:penultimate_est}
\|T^n(I-T)x\|\lesssim\ep\|x\|\Bigg(\bigg(\frac{m(\ep)}{cn}\bigg)^\ell+\sum_{j=0}^\ell \frac{1}{\ep^jn^j}\Bigg),
\end{equation}
where the implicit constant is independent of $n$, $\ep$ and $x$. We now set $\ep=m\inv(n)$ for $n\ge1$ sufficiently large to ensure that $\ep\in(0,1]$. Then $m(\ep)=n$ and hence $\ep n=\ep m(\ep)\ge1$. Since the map $t\mapsto\sum_{j=0}^\ell t^{-j}$ is decreasing on $(0,\infty)$, it follows from \eqref{eq:penultimate_est} that $\|T^n(I-T)\|\lesssim m^{-1}(n)$ for all sufficiently large $n\ge1$. This completes the proof.
\end{proof}

It is straightforward to construct examples of normal operators, and in particular of multiplication operators on $L^2$-spaces, which fit into the framework of Theorem~\ref{thm:pos_inc} and have essentially any desired rate of resolvent growth. As we show in the next section, the kind of resolvent behaviour to which Theorem~\ref{thm:pos_inc} is tailored arises naturally also in the setting of non-normal operators.

\section{A special class of analytic Toeplitz operators}\label{sec:Toep}

We consider here a special class of analytic Toeplitz operators on $H^2(\TT)$. More specifically, we are interested in Toeplitz operators whose
 symbols are not merely in $H^\infty(\TT)$ but of a rather particular form. They are examples of what Dungey \cite{Dun11} refers to as \emph{subordinated discrete semigroups of operators}, which arise naturally in the study of random walks on groups. A good deal of what follows could be extended without much difficulty to the class of \emph{all} analytic Toeplitz operators, but our special class has some nice additional properties and, as we shall see, is sufficiently rich to generate examples of non-normal operators with interesting resolvent behaviour. 

Let us write $P(\ZZ_+)$ for the set of sequences $a=(a_n)_{n\ge0}\in\ell^1(\ZZ_+)$ such that $a_n\ge0$ for all $n\ge0$ and $\sum_{n=0}^\infty a_n=1$. Furthermore, given $a\in P(\ZZ_+)$, we define the function $\phi_a$ by
$$\phi_a(\lambda)=\sum_{n=0}^\infty a_n\lambda^n,\quad |\lambda|\le1.$$
One may think of the elements of $P(\ZZ_+)$ as probability densities on $\ZZ_+$, and of $\phi_a$ as the probability generating function of a random variable $Y$ taking values in $\ZZ_+$ with $\PP(Y=n)=a_n$ for $n\ge0$. Note that, for each $a\in P(\ZZ_+)$,  the function $\phi_a$ lies in the analytic Wiener algebra and hence in the disk algebra. In particular, if we identify $\phi_a$ with its boundary function, which in this case is simply the restriction to the unit circle, then $\phi_a\in H^\infty(\TT)$. Let us denote by $T_a$ the Toeplitz operator on the Hardy space $H^2(\TT)$ with symbol $\phi_a$, so that $T_af(t)=\phi_a(t)f(t)$ for $f\in H^2(\TT)$, $t\in\TT$. Thus $T_a$ is a multiplication operator of unit norm, but it  is non-normal unless $a=(1,0,0,\dots)$, in which case $T_a=I$. We mention in passing that $T_a$ is unitarily equivalent, for each $a\in P(\ZZ_+)$, to the discrete convolution operator $Q_a$ defined on $\ell^2(\ZZ_+)$ by $Q_ax=a*x$. In particular, since $P(\ZZ_+)$ is invariant under $Q_a$, we may think of $a\in P(\ZZ_+)$ as encoding the transition probabilities of a homogeneous Markov chain on $\ZZ_+$ associated with $T_a$.  

One of the main reasons for working with the operators $T_a$ is that their spectral theory is particularly simple. Indeed, it follows from continuity of the symbol $\phi_a$ and the classical theory of analytic Toeplitz operators \cite[Sect.~2.4]{BoeSil06}, or alternatively from \cite[Th.~2.1]{Dun11}, that $\sigma(T_a)=\phi_a(\overline{\DD})$ for all $a\in P(\ZZ_+)$. Furthermore, it is straightforward to see that $\sigma(T_a)\cap\TT=\phi_a(\TT)\cap\TT$. Following \cite{Dun11} we say that a density $a\in P(\ZZ_+)$ is  \emph{aperiodic} if for every $k\in\ZZ$ the set $\{n\in\ZZ:n\ge k\mbox{ and } a_{n-k}>0\}$ generates the additive group $\ZZ$. Note in particular that $a\in P(\ZZ_+)$ is aperiodic whenever there exists $n\in\ZZ_+$ such that $a_n,a_{n+1}>0$. It is shown in \cite[Prop.~2.5]{Dun11} and \cite[Chap.~II]{Spi76} that $\phi_a(\TT)\cap\TT=\{1\}$ whenever $a\in P(\ZZ_+)$ is an aperiodic density, and that the converse holds for densities $a\in P(\ZZ_+)$ whose support $\{n\in\ZZ_+:a_n>0\}$ generates $\ZZ$. Hence if $a\in P(\ZZ_+)$ is aperiodic and if $0<|\theta|\le\pi$ then $R(e^{i\theta},T_a)$ is the (analytic) Toeplitz operator with symbol $t\mapsto (e^{i\theta}-\phi_a(t))\inv$, $t\in\TT$, and in particular 
\begin{equation}\label{eq:Toep_res}
\|R(e^{i\theta},T_a)\|=\frac{1}{\dist(e^{i\theta},\phi_a(\TT))}, \quad 0<|\theta|\le\pi.
\end{equation}
This allows us to construct many examples of non-normal operators which fit into the framework of Theorem~\ref{thm:pos_inc} and have interesting resolvent behaviour. 

\begin{ex}\label{eq:Toep}
If we take $a\in P(\ZZ_+)$ to be the aperiodic density defined by $a_0=a_1=0$ and $a_n=n\inv(n-1)\inv$, $n\ge2$, then we find as in the proof of \cite[Th.~5.2]{Dun11} that 
$$\phi_a(\lambda)=\lambda+(1-\lambda)\log(1-\lambda),\quad \lambda\in\overline{\DD}\setminus\{1\},$$
where $\log$ denotes the principal branch of the logarithm, with a branch cut along $(-\infty,0]$. A direct calculation using \eqref{eq:Toep_res} shows that 
$$\|R(e^{i\theta},T_a)\|\sim\frac{2\log|\theta|\inv}{\pi|\theta|},\quad|\theta|\to0,$$
and in particular the best possible upper bound for the resolvent in this example will have reciprocally positive increase, indeed will be regularly varying, but will not be of polynomial type. We deduce from Theorem~\ref{thm:pos_inc} after a small calculation that $\|T_a^n(I-T_a)\|=O(n\inv\log (n))$ as $n\to\infty$, and in fact from  \cite[Cor.~2.6]{Sei16} we have $\|T_a^n(I-T_a)\|\asymp n\inv\log(n)$ as $n\to\infty$. Note that the decay rate implied by \eqref{eq:mlog} in this case is $\|T_a^n(I-T_a)\|=O(n\inv\log (n)^2)$ as $n\to\infty$, which is off by a full logarithm.
\end{ex}

\begin{rem}\begin{enumerate}[(a)]
\item It is shown in \cite[Th.~2.4]{Dun11} that $\|T_a^n(I-T_a)\|=O(n^{-1/2})$ as $n\to\infty$ for \emph{all} aperiodic densities $a\in P(\ZZ_+)$, and by considering the sequence $a=(\frac12,\frac12,0,0,\dots)$ and appealing to Proposition~\ref{prp:lb} or \cite[Cor.~2.6]{Sei16} we see that this bound cannot in general be improved. 
\item A more involved approach leading to general resolvent estimates for Toeplitz operators with \emph{continuous} symbols may be found in \cite{Pel84}.
\end{enumerate}
\end{rem}
 
\section{Necessity of reciprocally positive increase}

Having shown in Theorem~\ref{thm:pos_inc} that we obtain the best possible rate of decay in Theorem~\ref{thm:KT} whenever the resolvent bound is a function of reciprocally positive increase, we now show that this is the largest possible class of functions for which one may hope to obtain such a statement, at least within a natural class of operators for which the resolvent norms are determined (up to a constant) by the distance to the spectrum. This latter condition is satisfied in many natural examples, and in particular it holds for all normal operators and, more generally, for multiplication operators on classical function spaces such as $L^p$-spaces and spaces of continuous functions. The condition is also satisfied for the larger class of \emph{quasi-multiplication operators} to be introduced in Section~\ref{sec:QMO} below, which includes non-normal (multiplication) operators on the Hardy space $H^2(\TT)$ of the type considered in Section~\ref{sec:Toep}; see Example~\ref{ex:QMO}. Thus Theorem~\ref{thm:pos_inc} is optimal in a rather natural sense; for related statements in the continuous case see \cite[Th.~3.4, 3.7 and~3.10]{RSS19}.

\begin{prp}\label{prp:pos_inc_nec}
Let $X$ be a Banach space and let $T\in\B(X)$ be such that $\{1\}\subseteq\sigma(T)\cap\TT\subseteq\DD\cup\{1\}$. Further let $m\colon(0,\pi]\to(0,\infty)$ be a continuous non-increasing function and suppose that $\|R(e^{i\theta},T)\|\le m(|\theta|)$ for $0<|\theta|\le\pi$ and that
$$ \max_{\ep\le|\theta|\le\pi}\frac{1}{\dist(e^{i\theta},\sigma(T))}\ge\delta m(\ep),\quad 0<\ep\le\pi,$$
for some $\delta\in(0,1]$. If 
$$\|T^n(I-T)\|=O\big(m\inv(cn)\big),\quad n\to\infty,$$
for some $c>0$ then $m$ has reciprocally positive increase.
\end{prp}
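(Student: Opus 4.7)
The plan is to establish the equivalent form of reciprocally positive increase that there exist fixed $t_0, C > 1$ with $m(\ep/t_0) \ge Cm(\ep)$ for all sufficiently small $\ep > 0$; the full condition~\eqref{eq:pos_inc} then follows by a routine iteration, as noted after~\eqref{eq:pos_inc}. Write $b$ for $c$ and $B$ for the implicit constant, so $\|T^n(I-T)\| \le B\, m\inv(bn)$ for all $n \ge n_0$. Note also that $m(\ep) \ge 1/|e^{i\ep}-1| \ge 1/\ep$ automatically, so in particular $m(\ep) \to \infty$ as $\ep \to 0+$.

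For each small $\ep$, the hypothesis on the maximum, together with compactness of $\sigma(T)$, furnishes $\theta_\ep \in [-\pi,\pi]$ with $|\theta_\ep| \ge \ep$ and $\lambda_\ep \in \sigma(T)$ such that $|e^{i\theta_\ep}-\lambda_\ep| = \dist(e^{i\theta_\ep},\sigma(T)) \le 1/(\delta m(\ep))$. Writing $\lambda_\ep = s_\ep e^{i\phi_\ep}$ with $|\phi_\ep|\le\pi$, elementary trigonometry gives $|s_\ep - 1| \le 1/(\delta m(\ep))$ and, for $\ep$ small enough that $s_\ep \ge 1/2$, $|\theta_\ep - \phi_\ep| \le C_1/m(\ep)$ for some absolute constant $C_1 > 0$. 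Fixing $c_* > 2/(b\delta)$ and setting $\gamma := bc_*/2 > 1$, I take $n_\ep := \lfloor c_* m(\ep) \rfloor$; then $s_\ep^{n_\ep} \ge \rho$ for some fixed $\rho > 0$ and $b n_\ep \ge \gamma m(\ep)$, both for all sufficiently small $\ep$. Since $\lambda_\ep^{n_\ep}(1-\lambda_\ep) \in \sigma(T^{n_\ep}(I-T))$, the spectral radius bound combined with the decay hypothesis yields
$$\rho|1-\lambda_\ep| \le s_\ep^{n_\ep}|1-\lambda_\ep| \le \|T^{n_\ep}(I-T)\| \le B\, m\inv(\gamma m(\ep)).$$

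The main obstacle is to derive a useful lower bound on $|1 - \lambda_\ep|$: we only control $|\phi_\ep - \theta_\ep|$ to within $O(1/m(\ep))$, a quantity that can be as large as $\ep$ itself whenever $m$ sits near its minimum $1/\ep$. I plan to overcome this with a dichotomy on $\ep m(\ep)$. In the regime $\ep m(\ep) \ge 2C_1$, one has $|\phi_\ep| \ge |\theta_\ep| - C_1/m(\ep) \ge \ep/2$, and the identity $|1-\lambda_\ep|^2 = (1-s_\ep)^2 + 4s_\ep\sin^2(\phi_\ep/2)$ together with $s_\ep\ge1/2$ yields $|1-\lambda_\ep| \gtrsim \ep$; substituting into the displayed inequality gives $\ep \le K\, m\inv(\gamma m(\ep))$ for some fixed $K > 1$, and applying the continuous non-increasing function $m$ to both sides (noting $\gamma m(\ep) \ge m(\pi)$ for small $\ep$, so the inverse is a true inverse) yields $m(\ep/K) \ge \gamma m(\ep)$. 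In the complementary regime $\ep m(\ep) < 2C_1$, the inequality $m(\ep) \ge 1/\ep$ forces $m(\ep) \asymp 1/\ep$, whence the trivial lower bound $m(\ep/(4C_1)) \ge 4C_1/\ep \ge 2m(\ep)$. Setting $t_0 := \max(K, 4C_1)$ and $C := \min(\gamma, 2) > 1$, the desired estimate $m(\ep/t_0) \ge Cm(\ep)$ holds in both regimes for all sufficiently small $\ep$, which completes the sketch.
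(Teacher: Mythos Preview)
Your argument is correct and shares the paper's overall architecture: both start from the spectral mapping bound $|\lambda_\ep|^n|1-\lambda_\ep|\le\|T^n(I-T)\|\lesssim m^{-1}(cn)$ for a point $\lambda_\ep\in\sigma(T)$ realising the distance condition at scale $\ep$, and both split off a trivial regime in which $m(\ep)\asymp\ep^{-1}$ so that reciprocally positive increase is automatic. The executions differ, however. The paper introduces the auxiliary function $p(\ep)=\max_{\ep\le|\theta|\le\pi}\dist(e^{i\theta},\sigma(T))^{-1}$, takes logarithms in the basic inequality, chooses $n$ tied to $p(t^{-1}\ep)$ for a free parameter $t\ge1$, and in the main case (its dichotomy being on $|1-\lambda|$) obtains the stronger intermediate estimate $p(t^{-1}\ep)/p(\ep)\gtrsim\log t$ before fixing $t$ large. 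You instead fix $n_\ep\asymp m(\ep)$ from the outset, note that $|\lambda_\ep|^{n_\ep}$ stays bounded below by a constant $\rho$, and read off the single multiplicative jump $m(\ep/K)\ge\gamma m(\ep)$ directly from $|1-\lambda_\ep|\gtrsim\ep$; your dichotomy on $\ep m(\ep)$ isolates the same trivial regime from the dual side. Your route is a bit more elementary---it avoids both the logarithm and the auxiliary $p$---while the paper's route yields a marginally sharper growth bound along the way, though that extra strength is not used. Both close with the same standard iteration (the paper invokes \cite[Lem.~2.1]{RSS19}) to pass from a fixed-ratio estimate to the full condition~\eqref{eq:pos_inc}.
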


\begin{proof}
Let 
$$p(\ep)=\max_{\ep\le|\theta|\le\pi}\frac{1}{\dist(e^{i\theta},\sigma(T))},\quad 0<\ep\le\pi,$$
so that $\delta m(\ep)\le p(\ep)\le m(\ep)$, $0<\ep\le\pi$, by simple properties of resolvent operators. Using the spectral mapping theorem for polynomials and the fact that $m\inv$ is non-increasing we may find a constant $C>0$ such that for all $\lambda\in\sigma(T)$ and all $b\in(c/2,c]$ we have
$$|\lambda|^n|1-\lambda|\le Cm\inv(bn)$$
for all sufficiently large $n\ge1$, and hence for $\lambda\in\sigma(T)\setminus\{0,1\}$ we have
\begin{equation}\label{eq:est}
n\log\left(\frac{1}{|\lambda|}\right)\ge\log\left(\frac{|1-\lambda|}{Cm\inv(bn)}\right)\ge\log\left(\frac{|1-\lambda|}{Cp\inv(\delta bn)}\right).
\end{equation}
Given $\ep\in(0,\pi]$ let $\theta\in[-\pi,\pi]$ with $|\theta|\ge\ep$ and $\lambda\in\sigma(T)$ be such that $p(\ep)=|e^{i\theta}-\lambda|\inv.$ Then $p(\ep)\inv\ge1-r$, where $r=|\lambda|$. In particular, $\lambda\ne0$ when $\ep$ is sufficiently small. In fact, since $p(\ep)\ge\ep\inv$ for $0<\ep\le\pi$ we see that $r\to1-$ and $\theta\to0$ as $\ep\to0+$. In particular, $1-r\ge\frac12\log (\frac1r)$ when $\ep$ is sufficiently small. Let $t\ge1$ and suppose for the moment that $\ep\in(0,\pi]$ is such that $p(t\inv\ep)=\delta bn$ for some $b\in(c/2,c]$ and some $n\ge1$. Then $p\inv(\delta bn)\inv\ge t\ep\inv$. Hence if $|1-\lambda|\ge\ep/2$ then $\lambda\ne1$ and \eqref{eq:est} yields
$$\frac{p(t\inv\ep)}{p(\ep)}\ge\frac{\delta b}{2}\log\left(\frac{t}{2C}\right)$$
provided $\ep$ is sufficiently small. On the other hand, if $|1-\lambda|<\ep/2$ then
$p(\ep)\inv\ge|1-e^{i\theta}|-|1-\lambda|\ge\ep/3$ if $\ep$ is sufficiently small, and hence $p(t\inv\ep)/p(\ep)\ge t/3.$ It follows that there exists $t>1$ such that $p(t\inv\ep)/p(\ep)\ge2\delta\inv$ for all sufficiently small values of $\ep\in(0,\pi] $ such that $p(t\inv\ep)=\delta bn$ for some $b\in(c/2,c]$ and some $n\ge1$. To handle the general case note that if $\ep\in(0,\pi]$ is sufficiently small then $c(n-1)<\delta\inv p(t\inv\ep)\le cn$ for some $n\ge2$. Let $b=\delta\inv n\inv p(t\inv\ep)$. Then $p(t\inv\ep)=\delta b n$ and 
$$\frac{c}{2}\le\frac{c(n-1)}{n}<b\le c,$$
and hence by the first part $p(t\inv\ep)/p(\ep)\ge2\delta\inv$. Define $M\colon[\pi\inv,\infty)\to(0,\infty)$ by $M(s)=m(s\inv)$, $s\ge\pi\inv$. Then
$$\liminf_{s\to\infty}\frac{M(t s)}{M(s)}=\liminf_{\ep\to0+}\frac{m(t\inv\ep)}{m(\ep)}\ge \delta\liminf_{\ep\to0+}\frac{p(t\inv\ep)}{p(\ep)}\ge2>1,$$
and it follows from \cite[Lem.~2.1]{RSS19} (see also \cite[Def.~2]{deHSta85}) that $M$ has positive increase, so $m$ has reciprocally positive inverse, as required. 
\end{proof}

\section{Quasi-multiplication operators}\label{sec:QMO}

We now consider the class of so-called \emph{quasi-multiplication operators}, that is to say bounded linear operators $T$ on a  Banach space such that
\begin{equation}\label{eq:qm}
\|f(T)\|=\sup_{\lambda\in\sigma(T)}|f(\lambda)|
\end{equation}
for all rational functions $f$ which have no poles in $\sigma(T)$. This class has previously been considered in \cite{BaChTo16, RSS19, Sei16}. Canonical examples of quasi-multiplication operators include multiplication operators on classical function spaces. Within the Hilbert space context the class contains all normal operators but, as we now show, it is possible for an an operator on a Hilbert space to be a quasi-multiplication operator without being normal.

\begin{ex}\label{ex:QMO}
Using the notation of Section~\ref{sec:Toep}, we see easily that $T_a$ is a non-normal quasi-multiplication operator on $H^2(\TT)$ for any density $a\in P(\ZZ_+)\setminus\{(1,0,0,\dots)\}$. Indeed, if $f$ is a rational function with no poles in $\sigma(T_a)=\phi_a(\overline{\DD})$ then $f(T_a)$ is the analytic Toeplitz operator with symbol $f\circ\phi_a\in H^\infty(\TT)$ and by the maximum modulus principle we have
$$\|f(T_a)\|=\|f\circ\phi_a\|_{L^\infty(\TT)}=\sup_{\lambda\in\sigma(T_a)}|f(\lambda)|,$$
as required.  More generally, \emph{any} analytic Toeplitz operator on $H^2(\TT)$ is a quasi-multipli\-cation operator.
\end{ex}

We conclude our paper with the following result for quasi-multiplication operators, which  gives improved upper and lower bounds in Theorem~\ref{thm:KT} involving no implicit constants. The result may be regarded as a discrete analogue of \cite[Th.~4.4]{RSS19}, but the proof is more involved and the lower bound is marginally less sharp. The latter appears to be unavoidable.

\begin{thm}\label{thm:quasi_mult}
Let $T$ be a quasi-multiplication operator such that $\{1\}\subseteq\sigma(T)\subseteq\DD\cup\{1\}$, and define the functions $m$ and $\mmax$ by
$$m(\ep)=\max_{\ep\le|\theta|\le\pi}\frac{1}{\dist(e^{i\theta},\sigma(T))},\quad 0<\ep\le\pi,$$
and 
$$\mmax(\ep)=\max_{\ep\le\theta\le\pi} m(\theta)\log\left(\frac{\theta}{\ep}\right),\quad 0<\ep\le\pi,$$
respectively. 
\begin{enumerate}
\item[\textup{(i)}] Suppose that $\delta\in(0,1]$ is such that
\begin{equation}\label{eq:ass_delta}
\min_{\ep\le|\theta|\le\pi}\dist(e^{i\theta},\sigma(T))\le\delta\ep
\end{equation}
for all sufficiently small $\ep\in(0,\pi]$. Then 
$$\|T^n(I-T)\|\le (1+\delta)\mmax\inv(n)$$
for all sufficiently large $n\ge1$. 
\item[\textup{(ii)}] Suppose that \eqref{eq:ass_delta} holds for some $\delta\in(0,1)$ and all sufficiently small $\ep\in(0,\pi]$, and let $\delta'\in(\delta,1)$ and  $c>1$. Then
\begin{equation}\label{eq:mmax_lb}
\|T^n(I-T)\|\ge (1-\delta')\mmax\inv(cn)
\end{equation}
for all sufficiently large $n\ge1$.
\end{enumerate}
\end{thm}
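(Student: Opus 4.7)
The quasi-multiplication identity \eqref{eq:qm} reduces both inequalities to estimating
\[\|T^n(I-T)\| = \max_{\lambda \in \sigma(T)}|\lambda|^n|1-\lambda|,\]
which I analyse through the parameters $\theta := |\arg\lambda|$ and $s := 1-|\lambda|$. The triangle inequality yields $|1-\lambda| \le \theta + s$ (using $|e^{i\arg\lambda}-\lambda|=s$ and $|1-e^{i\arg\lambda}|\le\theta$) and $|\lambda|^n \le e^{-ns}$. Continuity of $m$ makes $\mmax$ continuous, so $\mmax(\mmax\inv(n)) = n$ throughout.

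\emph{Upper bound.} Set $\ep = \mmax\inv(n)$. The definition of $\mmax$ rearranges to $\theta e^{-n/m(\theta)} \le \ep$ for all $\theta \in [\ep,\pi]$, while \eqref{eq:ass_delta}, rewritten as $m(\eta) \ge 1/(\delta\eta)$, yields $n = \mmax(\ep) \ge m(e\ep)\log e \ge 1/(e\delta\ep)$ and hence $1/(en) \le \delta\ep$. I now split into $\theta \ge \ep$ and $\theta < \ep$. In the first case, $\dist(e^{i\arg\lambda},\sigma(T)) \le s$ combined with the definition of $m$ gives $s \ge 1/m(\theta)$, so $\theta e^{-ns} \le \theta e^{-n/m(\theta)} \le \ep$; in the second case, trivially $\theta e^{-ns} \le \theta < \ep$. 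In both cases $se^{-ns} \le \max_{x\ge 0}xe^{-nx} = 1/(en) \le \delta\ep$. Summing yields $|\lambda|^n|1-\lambda| \le (1+\delta)\ep$.

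\emph{Lower bound.} Set $\tilde\ep = \mmax\inv(cn)$, so $\mmax(\tilde\ep) = cn$ is attained at some $\theta_0 \in (\tilde\ep,\pi]$ with $m(\theta_0)\log(\theta_0/\tilde\ep) = cn$. Since $1/m(\theta_0) = \min_{\theta_0 \le |\theta| \le \pi}\dist(e^{i\theta},\sigma(T)) \le \delta\theta_0$ by \eqref{eq:ass_delta}, there exist $|\theta_*|\ge\theta_0$ and $\lambda_0 = r_0 e^{i\phi_0} \in \sigma(T)$ with $|\lambda_0-e^{i\theta_*}| = 1/m(\theta_0)$, giving $r_0 \ge 1-1/m(\theta_0)$ and $|1-\lambda_0| \ge 2\sin(\theta_0/2) - 1/m(\theta_0)$ by the (reverse) triangle inequality. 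A crucial preliminary step is $\theta_0 \to 0$: the bound $\mmax(\ep) \ge 1/(e\delta\ep)$ from the upper-bound argument gives $\tilde\ep \ge 1/(ce\delta n)$, so $\log(\theta_0/\tilde\ep) = O(\log n)$, forcing $m(\theta_0) = cn/\log(\theta_0/\tilde\ep) = \Omega(n/\log n) \to \infty$; since $m$ is bounded on $[\theta_{\min},\pi]$ for every $\theta_{\min} > 0$, this compels $\theta_0 \to 0$. Consequently $n/m(\theta_0)^2 = O(\log^2 n / n) = o(1)$, which allows the passage $(1-u)^n = e^{-nu}(1-o(1))$ with $u = 1/m(\theta_0)$. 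Combined with the identity $e^{-n/m(\theta_0)} = (\tilde\ep/\theta_0)^{1/c}$ (immediate from $n/m(\theta_0) = c^{-1}\log(\theta_0/\tilde\ep)$) and the estimate $2\sin(\theta_0/2) - 1/m(\theta_0) \ge \theta_0(1-\delta-o(1))$, we obtain
\[|\lambda_0|^n|1-\lambda_0| \ge \tilde\ep^{1/c}\theta_0^{1-1/c}(1-\delta)(1-o(1)) \ge \tilde\ep(1-\delta)(1-o(1)),\]
where the last inequality uses $\theta_0 \ge \tilde\ep$ and $1-1/c > 0$. For $n$ large $(1-\delta)(1-o(1)) \ge 1-\delta'$ since $\delta' > \delta$, giving \eqref{eq:mmax_lb}.

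The main obstacle is the lower bound, which requires simultaneous control of three independent error terms: the replacement of $(1-u)^n$ by $e^{-nu}$ (governed by $nu^2 \to 0$), the expansion $2\sin(\theta_0/2) = \theta_0(1+O(\theta_0^2))$, and the exponent $1/c$ in $(\tilde\ep/\theta_0)^{1/c}$. The slack $c > 1$ is essential because it produces the positive power $\theta_0^{1-1/c}$ that absorbs the corresponding factor $\theta_0^{-1/c}$ from $r_0^n$ and recovers a clean multiple of $\tilde\ep$; this is precisely why the lower bound must be stated with $\mmax\inv(cn)$ rather than $\mmax\inv(n)$.
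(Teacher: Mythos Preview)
Your proof is correct. Both parts rest on the same quasi-multiplication identity as the paper, but the execution is more direct in each case.

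For part~(i), the paper decomposes $\overline{\DD}$ into the ``Stolz-type'' region $\Omega_\delta=\{re^{i\theta}:r\le1-\delta|\theta|\}$ and its complement $\Theta_\delta$, invoking the maximum modulus principle on $\Omega_\delta$ and the resolvent bound $m(|\theta|)\ge(1-r)^{-1}$ on $\Theta_\delta$. You instead split $|1-\lambda|\le\theta+s$ and bound the two pieces $\theta e^{-ns}$ and $se^{-ns}$ separately; the observation $s\ge1/m(\theta)$ replaces the region decomposition, and the universal bound $se^{-ns}\le1/(en)$ replaces the maximum modulus step. This is cleaner and avoids any appeal to complex analysis.

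For part~(ii), the paper works backwards: it introduces $\omega(n)=\|T^n(I-T)\|$ together with right- and left-inverses $\omega_*,\omega^*$, derives from \eqref{eq:qm} the inequality $n\log(1/|\lambda|)\ge\log(|1-\lambda|/\omega(n))$, and after some manipulation obtains $\omega_*((1-\delta')\ep)\ge c'^{-1}m(t\ep)\log t$ for all $t\ge1$, which upon maximising over $t$ becomes $\omega^*((1-\delta')\ep)\ge c^{-1}\mmax(\ep)$. You take the forward route: pick the $\theta_0$ at which $\mmax(\tilde\ep)$ is attained, locate $\lambda_0\in\sigma(T)$ realising $m(\theta_0)$, and estimate $|\lambda_0|^n|1-\lambda_0|$ directly. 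The two arguments are dual to one another---your choice of $\theta_0$ corresponds to the paper's optimal choice of $t$---but your version dispenses with the inversion machinery at the cost of having to handle the asymptotic errors (the passage $(1-u)^n\approx e^{-nu}$ and $2\sin(\theta_0/2)\approx\theta_0$) explicitly, which you do correctly via the preliminary step $\theta_0\to0$. One small expository point: you invoke $1/m(\theta_0)\le\delta\theta_0$ before establishing that $\theta_0$ is small enough for \eqref{eq:ass_delta} to apply, though you then supply the missing argument; reordering these two sentences would make the logic cleaner.
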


\begin{rem}
Note that condition \eqref{eq:ass_delta} certainly holds for $\delta=1$, as a simple consequence of the fact that $1\in\sigma(T)$, and hence we always have
$\|T^n(I-T)\|\le 2\mmax\inv(n)$ for all sufficiently large $n\ge1$. If 
$$\min_{\ep\le|\theta|\le\pi}\dist(e^{i\theta},\sigma(T))=o(\ep),\quad \ep\to0+,$$  
then \eqref{eq:ass_delta} holds for \emph{all} $\delta\in(0,1]$ and \eqref{eq:mmax_lb} holds for all $\delta'\in(0,1)$. The example of the identity operator shows that there need not be a non-trivial lower bound if \eqref{eq:ass_delta} does not hold for any $\delta\in(0,1)$. 
\end{rem}

\begin{proof}[Proof of Theorem~\ref{thm:quasi_mult}]
We begin by proving part (i). Since $T$ is a quasi-multiplication operator we have 
\begin{equation}\label{eq:qm_id}
\|T^n(I-T)\|=\sup_{\lambda\in\sigma(T)}|p_n(\lambda)|,\quad n\ge0,
\end{equation}
where $p_n(\lambda)=\lambda^n(1-\lambda)$. Let us define  regions $\Omega_\delta$ and $\Theta_\delta$ by
$$\begin{aligned}
\Omega_\delta&=\big\{re^{i\theta}:  -\pi\le\theta\le\pi,0\le r\le\max\{0,1-\delta|\theta|\}\big\},\\
\Theta_\delta&=\big\{re^{i\theta}:  -\pi\le\theta\le\pi,\max\{0,1-\delta|\theta|\}\le r\le1\big\},
\end{aligned}$$
so that the union of $\Omega_\delta$ and $\Theta_\delta$ is the closed unit disk. In particular, $\sigma(T)\subseteq\Omega_\delta\cup\Theta_\delta$. 
We note first that, by the maximum modulus principle, 
$$\sup_{\lambda\in\sigma(T)\cap\Omega_\delta}|p_n(\lambda)|\le\sup_{\lambda\in\partial\Omega_\delta}|p_n(\lambda)|.$$
Now for $\lambda\in\partial\Omega_\delta$  we may write $\lambda=re^{i\theta}$ with  $|\theta|\le\min\{\pi,\delta\inv\}$ and $r=1-\delta|\theta|$. Then 
\begin{equation}\label{eq:easy_est}
|1-\lambda|\le|e^{i\theta}-re^{i\theta}|+|1-e^{i\theta}|\le1-r+|\theta|=(1+\delta)|\theta|,
\end{equation}
 and hence
$$|p_n(\lambda)|\le(1+\delta)|\theta|\big(1-\delta|\theta|\big)^n,\quad n\ge0.$$
The map $t\mapsto t(1-t)^n$ attains its maximum on $[0,1]$ at $t=(n+1)\inv$, so
$$\sup_{\lambda\in\partial\Omega_\delta}|p_n(\lambda)|\le\frac{1+\delta\inv}{n+1}\left(1-\frac{1}{n+1}\right)^n\le\frac{1+\delta\inv}{en},\quad n\ge1.
$$
Let $\ep_n=\mmax\inv(n)$, $n\ge1$. Then $n=\mmax(\ep_n)\ge m(\ep_n e)$ provided that $\ep_n\le\pi/e$, and hence by  \eqref{eq:ass_delta} we have $n\ge\delta\inv\ep_n\inv e\inv$ if $n$ is sufficiently large. It follows that $e\inv n\inv\le\delta\mmax\inv(n)$ and consequently  
\begin{equation}\label{eq:Omega_est}\sup_{\lambda\in\sigma(T)\cap\Omega_\delta}|p_n(\lambda)|\le (1+\delta)\mmax\inv(n)
\end{equation}
for all sufficiently large $n\ge1$. Now suppose that $\lambda\in\sigma(T)\cap\Theta_\delta$ and write $\lambda=re^{i\theta}$ with $-\pi\le\theta\le\pi$ and $\max\{0,1-\delta|\theta|\}\le r\le1$. Then $1-r\le\delta|\theta|$ and hence as in \eqref{eq:easy_est} we have $|1-\lambda|\le (1+\delta)|\theta|$. Using the fact that $m(|\theta|)\ge(1-r)\inv$ we obtain
$$|p_n(\lambda)|\le |1-\lambda|\left(1-\frac{1}{m(|\theta|)}\right)^n\le (1+\delta)|\theta|\exp\left(-\frac{n}{m(|\theta|)}\right),\quad n\ge1.$$
If $|\theta|\le\ep_n=\mmax\inv(n)$ then  $|p_n(re^{i\theta})|\le(1+\delta)\mmax\inv(n)$. Suppose that $\ep_n\le|\theta|\le\pi$. Then 
$$n=\mmax(\ep_n)\ge m(|\theta|)\log\left(\frac{|\theta|}{\ep_n}\right),$$
and hence 
$$|\theta|\exp\left(-\frac{n}{m(|\theta|)}\right)\le\mmax\inv(n),\quad n\ge1.$$
Thus
$$\sup_{\lambda\in\sigma(T)\cap\Theta_\delta}|p_n(\lambda)|\le(1+\delta)\mmax\inv(n),\quad n\ge1,$$
and combining this with \eqref{eq:Omega_est} in \eqref{eq:qm_id} yields the result.

In order to prove part~(ii), suppose that \eqref{eq:ass_delta} holds for some $\delta\in(0,1)$ and all sufficiently small $\ep\in(0,\pi]$, and let $\delta'\in(\delta,1)$ and $c>1$. Define $\omega\colon\ZZ_+\to(0,\infty)$ by $\omega(n) = \|T^n(I-T)\|,$ $n\in \ZZ_+$, noting that $\omega(n)\to0$ as $n\to\infty$ by Theorem~\ref{thm:KT}, and define the maps $\omega_*$ and $\omega^*$ by
$$\begin{aligned}
\omega_*(s)&=\min\{n\in\ZZ_+:\omega(n)\le s\},\quad &&s>0,\\
\omega^*(s)&=\max\{n\in\ZZ_+:\omega(n)\ge s\},\quad &&0<s\le \omega(0).
\end{aligned}$$
We have $\omega_*(s),\omega^*(s)\to\infty$ as $s\to0+$, and moreover $\omega(\omega_*(s))\le s$ for all $s>0$ while $\omega(\omega^*(s))\ge s$ for $0<s\le\omega(0)$. Furthermore, $\omega(\omega^*(s)+1)<s$  and hence $\omega_*(s)\le\omega^*(s)+1$ for $0<s\le\omega(0)$. By quasi-multiplicativity of $T$ we have $\omega(n) \geq |p_n(\lambda)|$ for all  $\lambda \in \sigma(T)$ and $n\geq 0,$ and therefore
\begin{equation}\label{eq:log_bd}
n\log\left(\frac{1}{|\lambda|}\right)\ge\log \left(\frac{|1-\lambda|}{\omega(n)}\right), \quad \lambda \in \sigma(T)\setminus\{0,1\},\ n\geq0.
\end{equation}
Let $\ep\in(0,\pi]$ and let $t\ge1$ be such that $t\ep\le\pi$. We may find $\theta\in[-\pi,\pi]$ with $|\theta|\ge t\ep$ and $\lambda\in\sigma(T)$ such that $m(t\ep)=|e^{i\theta}-\lambda|\inv$. As in the proof of Proposition~\ref{prp:pos_inc_nec}, letting $r=|\lambda|$ we have $r\to1-$ and $\theta\to0$ as $t\ep\to0$. Hence  given  $c'\in(1,c)$ we may find $\theta_0\in(0,\pi]$ such that $1-r\ge \frac{1}{c'}\log (\frac1r)$ and $|1-e^{i\theta}|\ge (1+\delta-\delta')|\theta|$ whenever $t\ep\in(0,\theta_0)$. By making $\theta_0$ smaller if necessary we may further assume that $\lambda\ne0$ and, by virtue of \eqref{eq:ass_delta}, that  $|e^{i\theta}-\lambda|=m(t\ep)\inv\le \delta t\ep$ for $t\ep\in(0,\theta_0)$. The latter implies that $$|1-\lambda|\ge|1-e^{i\theta}|-|e^{i\theta}-\lambda|\ge(1-\delta')t\ep,$$
and in particular $\lambda\ne1$, whenever $ t\ep\in(0,\theta_0)$. Hence \eqref{eq:log_bd} yields
\begin{equation}\label{eq:minv_est}
m(t\ep)\inv=|e^{i\theta}-\lambda|\ge 1-r\ge \frac{1}{c'}\log\left(\frac1r\right)\ge\frac{1}{c'n}\log\left(\frac{(1-\delta')t\ep}{\omega(n)}\right)
\end{equation}
for all $n\ge1$ provided that $t\ep\in(0,\theta_0)$. Since $t\ge1$  we may assume, once again by making $\theta_0$ smaller if necessary, that $\omega_*((1-\delta')\ep)\ge1$ for $t\ep\in(0,\theta_0)$, and hence setting $n=\omega_*((1-\delta')\ep)$ in \eqref{eq:minv_est}  yields
\begin{equation}\label{eq:omega_est}
\omega_*\big((1-\delta')\ep\big)\ge \frac{1}{c'}m(t\ep)\log(t),\quad t\ep\in(0,\theta_0).
\end{equation}
It is straightforward to see that 
$$\mmax(\ep)=\max_{\ep\le\vartheta\le\theta_0}m(\vartheta)\log\left(\frac{\vartheta}{\ep}\right)$$
provided $\ep\in(0,\theta_0)$ is sufficiently small. We may combine this in \eqref{eq:omega_est} with our earlier observations about $\omega_*$ and $\omega^*$ to deduce that $\omega^*((1-\delta')\ep)\ge c\inv\mmax(\ep)$ for all sufficiently small $\ep\in(0,\pi]$. Setting $\ep=\mmax\inv(cn)$ for sufficiently large $n\ge1$ and applying the decreasing map $\omega$ to both sides of the resulting estimate, we obtain 
$$\omega(n)\ge\omega\big(\omega^*\big((1-\delta')\mmax\inv(cn)\big)\big)\ge(1-\delta')\mmax\inv(cn),$$
which completes the proof.
\end{proof}

We conclude by comparing the bounds obtained in Theorem~\ref{eq:mmax_lb} with the results discussed elsewhere in the paper. Let us suppose, to this end, that the function $m$ considered in Theorem~\ref{eq:mmax_lb} satisfies $m(\ep)\ge c_\alpha\ep^{-\alpha}$ for some constants $\alpha\ge1$, $c_\alpha>0$ and all sufficiently small $\ep\in(0,\pi]$, noting that this always holds for $\alpha=1$ and $c_1=1$. Then for any $c\in(0,1+\alpha)$ we have $\mmax(\ep)\le c\inv \mlog(\ep)$ for all sufficiently small $\ep\in(0,\pi]$, and hence $\mmax\inv(n)\le \mlog\inv(cn)$ for all sufficiently large $n\ge1$. In particular, for quasi-multi\-pli\-cation operators Theorem~\ref{thm:quasi_mult} produces a sharper version of \eqref{eq:mlog}, involving no implicit `big O' constant and a larger range of admissible values of $c>0$. Similarly, if we assume that $m$ has reciprocally positive increase then one may show that $\mmax(\ep)\le (c \alpha e)\inv m(\ep)$ for all sufficiently small $\ep\in(0,\pi]$, where $c,\alpha>0$ are as in \eqref{eq:pos_inc}. Using \cite[Prop.~2.2]{RSS19} it follows that $\mmax\inv(n)=O(m\inv(n))$ as $n\to\infty$, so we recover Theorem~\ref{thm:pos_inc} in our special setting of quasi-multi\-pli\-cation operators. We note in passing that, by Proposition~\ref{prp:pos_inc_nec} and Theorem~\ref{thm:quasi_mult}, an estimate of the form $\mmax\inv(n)=O(m\inv(cn))$ as $n\to\infty$ with $c>0$ can hold \emph{only} if $m$ has positive increase.
Finally, in order to compare the lower bound in \eqref{eq:mmax_lb} with Proposition~\ref{prp:lb} and  \cite[Cor.~2.6]{Sei16}, observe that for any $c,c'>0$ we have $\mmax\inv(cn)\ge e^{-c/c'}m\inv(c'n)$ when $n\ge1$ is sufficiently large.

\medskip

\noindent \textbf{Acknowledgement.} The authors wish to thank Prof.\ Charles Batty for helpful discussions on the topic of this paper.

\end{document}